\documentclass{article}
\usepackage[english]{babel}
\usepackage[letterpaper,top=2cm,bottom=2cm,left=3cm,right=3cm,marginparwidth=1.75cm]{geometry}
\usepackage{amsmath,amssymb, amsfonts,textcomp, amsthm, mathtools}
\usepackage{graphicx}
\usepackage{comment}
\usepackage{float}
\usepackage{tablefootnote}
\usepackage{longtable}
\usepackage{enumerate}

\usepackage{boldline} 
\usepackage{setspace}
\usepackage[hypertexnames=false]{hyperref}

\theoremstyle{plain}
\newtheorem{theorem}{Theorem} 
\newtheorem{lemma}[theorem]{Lemma}
\newtheorem{corollary}[theorem]{Corollary}

\newtheorem{question}[theorem]{Question}

\theoremstyle{definition}
\newtheorem{definition}[theorem]{Definition}
\newtheorem{example}[theorem]{Example}

\theoremstyle{remark}

\DeclareMathOperator{\Span}{span}
\author{K.S. Enoch Lee}
\title{A note on generalized distributive sets of a finite Dickson nearfield}

\begin{document}
\emergencystretch 3em
\maketitle
\begin{abstract}Let $\mathbb{F}_{q^n}$ be  a finite field  where $q=p^l$ ($p$ is a prime)  and a Dickson nearfield $R$ associated to the field. 
In this note we investigate the basic properties of the  generalized distributive set $D(a,b)$ where $a,b\in R$. The set is well-known when $n=1, 2$. For $n>2$, we characterize when $D(a,b)$ is a subfield of $\mathbb{F}_{q^n}$. Furthermore we find the unique largest subfield of $\mathbb{F}_{q^n}$ contained in $D(a,b)$. We also provide a condition to guarantee that $D(a,b)$ is not a subfield of  $\mathbb{F}_{q^n}$. We end the note by posing the question of  when $D(a,b)$, as a space over $\mathbb{F}_p$, has a primitive normal basis.
  \end{abstract}
In the following we agree that $\mathbb{F}_m=(\mathbb{F}_{m},+,\cdot)$ is the finite field of order $m$ and $(q,n)$ is a Dickson pair~\cite{DJAGBA20} or a pair of Dickson numbers~\cite{Pilz83} where $q=p^l$ for some prime $p$.
Let $DN(q,n)$ be the collection of all Dickson nearfields~\cite{DJAGBA20} associated to the Dickson pair $(q,n)$.
Denote by 
$g\in \mathbb{F}_{q^n}$ a primitive element (i.e., a generator of the non-zero elements of the field) with the primitive polynomial $p(x)$. In this note a nearring is a zero-symmetric 
left nearring unless stated otherwise. Thus we only consider nearfields that are zero-symmetric. 
We denote by
 $DN_g(q,n)=(DN_g(q,n),+,\circ)$ the Dickson nearfield~\cite{DJAGBA20}, \cite{Pilz83} associated to $g$. The underlying set of $DN_g(q,n)$ and that of $\mathbb{F}_{q^n}$ are the same. 
Therefore we might use $DN_g(q,n)$ or $\mathbb{F}_{q^n}$ to denote the underlying set should there be no confusion. 
Furthermore $$DN(q,n)=\{DN_g(q,n) \mid g \text{ is a primitive element of }\mathbb{F}_{q^n}\}.$$
We write $ab$ or  $a\cdot b$ for the field multiplication while $a\circ b$ the Dickson nearfield multiplication. Here,  $a^{-1}$ denotes the
multiplicative inverse of $a$ with respect to the field multiplication while ${a^{\circ(-1)}}$ is the   Dickson nearfield multiplicative inverse instead.

For any non-zero $a,b\in DN_g(q,n)$, the generalized distributive set associated to the pair $a$ and $b$ is defined by
 $$D(a,b)=\{d\in DN_g(q,n) \mid (a+b)\circ d =a\circ d +b\circ d \}.$$ 
Immediately we have $(D(a,b),+)$ is an abelian group.   If there is no confusion, when we write $D(a,b)$ or $DN_g(q,n)$ we assume a Dickson pair  $(q,n)$ where $q=p^l$ for some prime $p$.

For the purpose of discussion we stipulate that $H=\langle g^n \rangle$ the multiplicative subgroup of $(\mathbb{F}^*_{q^n},\cdot)$ generated by $g^n$ where $\mathbb{F}^*_{q^n}$ means the non-zero element of $\mathbb{F}_{q^n}$.
Below we list some known results.

\begin{lemma}\cite[Lemma 3.5]{DJAGBA20} Suppose $a$ and $b\in DN_g(q,n)$. If $a$, $b$, and $a+b$  all belong to the same $H$-coset, then
$(a+b)\circ d=a\circ d+b\circ d$ for any $d\in DN_g(q,n)$.  Hence $(D(a,b), +, \cdot)=(\mathbb{F}_{q^n},+,\cdot)$. 
	\end{lemma}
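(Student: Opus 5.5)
The argument rests on the explicit form of the Dickson multiplication. By construction of $DN_g(q,n)$, the cosets of $H=\langle g^n\rangle$ in $\mathbb{F}^*_{q^n}$ are $g^{k}H$ for $k=0,1,\dots,n-1$. To each coset there is attached an automorphism $\phi_k\in\mathrm{Gal}(\mathbb{F}_{q^n}/\mathbb{F}_q)$, namely $x\mapsto x^{q^{\lambda_k}}$ for a suitable exponent $\lambda_k$ determined by $k$; in the standard construction $\lambda_k=k$, using $q^k\equiv$ index data modulo $n$. The multiplication is
\[
x\circ d=\begin{cases} x\cdot\phi_{k}(d), & x\in g^{k}H,\\ 0, & x=0.\end{cases}
\]
The only feature needed is that $x\circ d$ equals $x$ times the image of $d$ under an automorphism that depends only on the $H$-coset of $x$.

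Suppose $a$, $b$ and $a+b$ all lie in the same coset $g^kH$. Then all three products use the same automorphism $\phi=\phi_k$. Field distributivity gives
\[
(a+b)\circ d=(a+b)\phi(d)=a\phi(d)+b\phi(d)=a\circ d+b\circ d
\]
for every $d$. The case $d=0$ is immediate, since both sides are $0$ by zero-symmetry. It follows that $D(a,b)$ is the whole underlying set $\mathbb{F}_{q^n}$. Because the underlying additive and field-multiplicative structures coincide with those of the field, $(D(a,b),+,\cdot)=(\mathbb{F}_{q^n},+,\cdot)$.

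The only potential obstacle is recalling the definition of $DN_g(q,n)$ precisely enough to confirm that the automorphism depends only on the coset of the \emph{left} factor. The statement silently requires $a,b,a+b\neq0$, since they must lie in an $H$-coset, and membership in a common coset is exactly what makes the automorphism the same for all three products.
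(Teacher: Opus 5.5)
Your proof is correct and is essentially the argument the paper relies on: the paper only cites this lemma, but it spells out the same reasoning in its $n=2$ discussion, namely that the Frobenius power applied to $d$ depends only on the $H$-coset of the left factor, so field distributivity does the rest. One harmless slip: in the standard construction the coset carrying $d\mapsto d^{q^k}$ is $g^{\frac{q^k-1}{q-1}}H$, not $g^kH$, so your labeling $\lambda_k=k$ is right for all $k$ only when $n\mid q-1$; your argument never uses this identification, so nothing breaks.
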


        For the special case when $n=1$, the pair $(q,1)$ is always a Dickson pair.  Moreover 
$$ x\circ y= x\cdot y.$$ Therefore the Dickson nearfield is the field $\mathbb{F}_q$ and we have $D(a,b)=\mathbb{F}_q$.

For the case when  $n=2$, we have   
$$x\circ d =\begin{cases} 0 & \text{ if }x=0  \\ x\cdot d & \text{ if }x\in H\\ x\cdot d^q& \text{ if } x\notin H \end{cases} $$
Let $\phi(x) \in \{1, q\}$ denote the Frobenius exponent associated with $x \in \mathbb{F}_{q^2}$, so that $x \circ d = x d^{\phi(x)}$. The defining relation $(a+b)\circ d = a\circ d + b\circ d$ expands to:$$(a+b) d^{\phi(a+b)} = a d^{\phi(a)} + b d^{\phi(b)}$$
If $a, b,$ and $a+b$ all belong to the same coset (or are $0$), then $\phi(a)=\phi(b)=\phi(a+b)=\sigma \in \{1, q\}$. Substituting this gives:$$(a+b)d^\sigma = ad^\sigma + bd^\sigma = (a+b)d^\sigma$$
This identity holds trivially for all $d \in \mathbb{F}_{q^2}$, so $D(a,b) = \mathbb{F}_{q^2}$. Note if $b = -a$, then $a+b=0$. Since $-1 \in H$ for all odd $q$, $-a$ belongs to the same coset as $a$, making $D(a,-a) = \mathbb{F}_{q^2}$ as well.

If $a, b,$ and $a+b$ are not all in the same coset, exactly two of the elements share an exponent while the third has the other exponent from $\{1, q\}$.
\begin{itemize}
\item If $\phi(a) = \phi(b) \neq \phi(a+b)$:
  $$(a+b) d^{\phi(a+b)} = (a+b) d^{\phi(a)}$$
  Since $a+b \neq 0$, dividing by $(a+b)$ yields $d^{\phi(a+b)} = d^{\phi(a)}$.
\item If $\phi(a) = \phi(a+b) \neq \phi(b)$:
  $$a d^{\phi(a)} + b d^{\phi(a)} = a d^{\phi(a)} + b d^{\phi(b)} \implies b d^{\phi(a)} = b d^{\phi(b)}$$
  Since $b \neq 0$, dividing by $b$ yields $d^{\phi(a)} = d^{\phi(b)}$.
\end{itemize}
Thus the relation reduces directly to $d^q = d$. The set of solutions to $d^q = d$ in $\mathbb{F}_{q^2}$ is precisely the subfield $\mathbb{F}_q$, so the set $D(a,b) = \mathbb{F}_q$.

\begin{theorem}\cite[Theorem 3.13]{DJAGBA20} Assume $n>2$. Suppose $a$ and $b\in DN_g(q,n)$. If at least two of $a$, $b$,
and $a+b$ belong to the same $H$-coset, then  $(D(a,b), +, \cdot)$ is a subfield of $(\mathbb{F}_{q^n},+,\cdot)$. 
\end{theorem}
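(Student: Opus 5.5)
Write $x\circ d = x\,d^{\phi(x)}$, where $\phi(x)=q^{k(x)}$ and $k(x)\in\{0,1,\dots,n-1\}$ depends only on the $H$-coset of $x$. This is the standard description of $DN_g(q,n)$: the cosets of $H=\langle g^n\rangle$ are $g^iH$, and $g^iH$ is assigned the automorphism $d\mapsto d^{q^{[i]_n}}$ with $[i]_n=(q^i-1)/(q-1) \bmod n$. It is the same mechanism as in the $n=2$ computation above. With this notation the defining relation of $D(a,b)$ reads
$$(a+b)\,d^{\phi(a+b)} = a\,d^{\phi(a)} + b\,d^{\phi(b)}.$$
The plan follows the $n=2$ argument: use the hypothesis that two of the three elements share a coset to collapse this three-term identity into a single equation of the form $d^{q^{s}}=d^{q^{t}}$. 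The solution set of such an equation is visibly a subfield.

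First, dispose of degenerate cases. If $a+b=0$, then $b=-a$ and the relation becomes $a\,d^{\phi(a)}=a\,d^{\phi(-a)}$. If $-1\in H$, then $D(a,b)=\mathbb{F}_{q^n}$. Otherwise we are already in the form $d^{\phi(a)}=d^{\phi(-a)}$. If all three elements lie in one coset, the earlier Lemma gives $D(a,b)=\mathbb{F}_{q^n}$, which is trivially a subfield. So assume exactly two of $a$, $b$, $a+b$ share a coset and the third has a different exponent. There are three cases.
\begin{itemize}
\item If $\phi(a)=\phi(b)\neq\phi(a+b)$, the relation becomes $(a+b)d^{\phi(a+b)}=(a+b)d^{\phi(a)}$. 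Since $a+b\neq 0$, this is equivalent to $d^{\phi(a+b)}=d^{\phi(a)}$.
\item If $\phi(a)=\phi(a+b)\neq\phi(b)$, subtracting $a\,d^{\phi(a)}$ from both sides and cancelling $b\neq0$ gives $d^{\phi(a)}=d^{\phi(b)}$.
\item The case $\phi(b)=\phi(a+b)\neq\phi(a)$ is symmetric to the previous one.
\end{itemize}

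In every case $D(a,b)=\{d\in\mathbb{F}_{q^n} : d^{q^s}=d^{q^t}\}$ for some $0\le s<t\le n-1$. Applying the automorphism $x\mapsto x^{q^{n-s}}$, which is a bijection, this is equivalent to $d^{q^{t-s}}=d$. So $D(a,b)$ is the fixed field of $\sigma^{t-s}$, where $\sigma$ is the Frobenius $x\mapsto x^q$. Being the fixed set of a field automorphism, it is closed under addition, multiplication and inverses and contains $0$ and $1$. Explicitly, it equals $\mathbb{F}_{q^{\gcd(t-s,n)}}$, a subfield of $\mathbb{F}_{q^n}$.

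The step needing the most care is the setup rather than the algebra. We must justify the coset-dependent exponent description $x\circ d=x\,d^{\phi(x)}$ with $\phi$ constant on $H$-cosets, including the convention that $\phi(0)$ is irrelevant because $0\circ d=0$. That is what allows "same coset" to be translated into "same exponent." Once this is in place, the reduction in each case is the same one-line cancellation as for $n=2$, and the subfield conclusion follows from the fixed-field observation.
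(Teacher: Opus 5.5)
Your argument is correct and is the same case analysis the paper carries out for $n=2$: you reduce the three-term identity to $d^{q^s}=d^{q^t}$ and read off the fixed field $\mathbb{F}_{q^{\gcd(t-s,n)}}$, which suffices since the paper itself only cites this result. One cosmetic slip: in the paper's convention the coset $g^{\frac{q^k-1}{q-1}}H$ acts by $d\mapsto d^{q^k}$, so your formula $q^{[i]_n}$ for $g^iH$ inverts that correspondence (e.g.\ for $(q,n)=(4,9)$, $g^5H$ gets $q^2$, not $q^8$), but you only use that the exponent is constant on cosets, so the proof is unaffected.
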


Hereafter we assume $n>2$. 
 A routine checking shows the following result.

\begin{lemma}
$D(1,b)$ is closed under the nearfield multiplication $\circ$ if and only if $D(1,b)\subseteq D(d , b\circ d)=D(1,d^{\circ ^{-1}}\circ b\circ d)$ for any $d\in D(1,b)$.
\end{lemma}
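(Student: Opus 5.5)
The plan is to prove two things separately: the equality $D(d,b\circ d)=D(1,d^{\circ(-1)}\circ b\circ d)$ for nonzero $d$, and the "if and only if" statement. Only three structural facts about $DN_g(q,n)$ are needed. First, $\circ$ is associative. Second, the left distributive law $d\circ(x+y)=d\circ x+d\circ y$ holds, since we work with left nearrings and it is the right law that $D(a,b)$ measures. Third, for $d\neq 0$ the map $x\mapsto d\circ x$ is a bijection, because $DN_g(q,n)$ is a nearfield.

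\textbf{Step 1: the equality of sets.} Fix $d\neq 0$ and put $c'=d^{\circ(-1)}\circ b\circ d$, so that $b\circ d=d\circ c'$. For any $x$, left distributivity and associativity give
$$(d+d\circ c')\circ x=\bigl(d\circ(1+c')\bigr)\circ x=d\circ\bigl((1+c')\circ x\bigr)$$
and
$$d\circ x+d\circ c'\circ x=d\circ(x+c'\circ x).$$
Since left multiplication by $d$ is injective, $x\in D(d,d\circ c')$ if and only if $(1+c')\circ x=x+c'\circ x$, that is, if and only if $x\in D(1,c')$. This proves $D(d,b\circ d)=D(1,d^{\circ(-1)}\circ b\circ d)$.

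\textbf{Step 2: the equivalence.} The key observation is that for $d\in D(1,b)$ we have $(1+b)\circ d=d+b\circ d$. Hence, for any $x$,
$$(d+b\circ d)\circ x=\bigl((1+b)\circ d\bigr)\circ x=(1+b)\circ(d\circ x),$$
while
$$d\circ x+(b\circ d)\circ x=d\circ x+b\circ(d\circ x).$$
So $x\in D(d,b\circ d)$ exactly when $(1+b)\circ(d\circ x)=1\circ(d\circ x)+b\circ(d\circ x)$, that is, exactly when $d\circ x\in D(1,b)$.

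Closure of $D(1,b)$ under $\circ$ means that $d\circ x\in D(1,b)$ for all $d,x\in D(1,b)$. By the previous paragraph this is equivalent to $x\in D(d,b\circ d)$ for all $x,d\in D(1,b)$, which is precisely $D(1,b)\subseteq D(d,b\circ d)$ for every $d\in D(1,b)$.

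\textbf{Degenerate cases.} The case $d=0$ needs separate handling, because $D(a,b)$ was only defined for nonzero arguments and $d^{\circ(-1)}$ does not exist. Here $0\circ x=0\in D(1,b)$ (zero-symmetry), so the case $d=0$ imposes no condition, and we read the statement for nonzero $d$. We would likewise note that $b\circ d\neq 0$ whenever $b,d\neq0$.

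\textbf{Main obstacle.} The only real care needed is bookkeeping of which distributive law holds. Step 2 uses only the membership $d\in D(1,b)$ together with associativity, and never invokes the right distributive law. Step 1 uses only the left distributive law and left cancellation. Neither step is difficult beyond this.
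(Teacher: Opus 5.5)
Your proof is correct and is the routine verification the paper asserts without writing out. Step 1 is the same left-distributivity, associativity and left-cancellation chain the paper uses for its next lemma ($D(1,c)=D(a,a\circ c)$), and the equivalence rests, as you show, on the identity that for $d\in D(1,b)$ one has $x\in D(d,b\circ d)$ if and only if $d\circ x\in D(1,b)$; your explicit treatment of $d=0$ is a small point the paper leaves implicit.
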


\begin{lemma}
Assume $0\neq a\in DN_g(q,n)$. Then $D(1,b)=D(a,a\circ b)$ for any $b\in DN_g(q,n)$. 
As a consequence $D(a,b)=D(1,a^{\circ(-1)}\circ b)$ whenever $a\neq 0$.
\begin{proof}
Since $a\neq 0$, the multiplicative inverse with respect to $\circ$ exists. It is quite easy to see that the stated result is a consequence of the following equivalence: 
\begin{enumerate}[(1)] 
\item  $d\in D(1,b)$.
\item $(1+b)\circ d=d+b\circ d$.
\item $a\circ ((1+b)\circ d)=a\circ (d+b\circ d)$.
\item $(a+a \circ b)\circ d=a\circ d+a\circ b\circ d$.
\item  $d\in D(a, a\circ b)$.
\end{enumerate}
\end{proof}
\end{lemma}

Hence we shall investigate the special case $D(1,b)$ instead of the general case. We list the following well-known number theoretic  results without proof.
 
\begin{lemma}Suppose $a$, $m$, and $n$ are positive integers. Then 
\begin{enumerate}[(1)]
\item $\gcd(a^m-1, a^n-1)=a^{\gcd(m,n)}-1$. 
\item $a^m-1 \mid a^n-1$ if and only if $m \mid n$.
\end{enumerate}
\end{lemma}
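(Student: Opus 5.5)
We prove (1) first and then deduce (2) from it. Throughout, $a,m,n$ are positive integers.

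For (1), the plan is to run the Euclidean algorithm on the exponents. The key observation is that for $m > n$,
$$a^m - 1 = a^{m-n}(a^n - 1) + (a^{m-n} - 1).$$
Hence $\gcd(a^m-1,\,a^n-1) = \gcd(a^{m-n}-1,\,a^n-1)$, since adding a multiple of one argument to the other leaves the gcd unchanged. So the pair of exponents $(m,n)$ may be replaced by $(m-n,n)$. We argue by strong induction on $m+n$. When $m=n$ the claim is immediate, because then $\gcd(m,n)=m$ and both numbers equal $a^m-1$. Otherwise we apply the subtraction step to the larger exponent and use $\gcd(m-n,n)=\gcd(m,n)$. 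This is exactly the subtractive Euclidean algorithm.

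The only delicate point is the degenerate case $a=1$. There both sides are $0$, using the convention $\gcd(0,0)=0$, so the statement holds trivially. Alternatively, one simply assumes $a\ge 2$, which is the only case used later in the paper ($a=p$ or $a=q$). With $a\ge2$, every quantity $a^k-1$ with $k\ge1$ is a positive integer, so the gcd manipulations are legitimate.

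For (2), suppose first that $m\mid n$, say $n=mk$. The factorization
$$a^n-1=(a^m-1)\bigl(a^{m(k-1)}+\cdots+a^m+1\bigr)$$
shows directly that $a^m-1 \mid a^n-1$. For the converse, suppose $a^m-1\mid a^n-1$ with $a\ge2$. Then $\gcd(a^m-1,\,a^n-1)=a^m-1$. By (1) this gcd also equals $a^{\gcd(m,n)}-1$, so $a^{\gcd(m,n)}=a^m$. Since $a\ge 2$, the map $k\mapsto a^k$ is injective, so $\gcd(m,n)=m$, that is, $m\mid n$.

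No step here is a real obstacle. The only thing requiring care is to state that $a\ge2$ in (2): for $a=1$ both sides of the divisibility are $0$, and the equivalence fails in the direction $a^m-1 \mid a^n-1 \Rightarrow m\mid n$. This is harmless, since the lemma is applied only with $a=p$ or $a=q$.
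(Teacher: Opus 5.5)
Your proof is correct. The paper lists this lemma as well known and gives no proof, and your argument — subtractive Euclidean induction on the exponents for (1), then injectivity of $k\mapsto a^k$ for $a\ge 2$ for (2) — is the standard one. Your caveat also genuinely corrects the statement as written: for $a=1$, part (2) fails (e.g.\ $m=2$, $n=3$, where $0\mid 0$). This is harmless, because the paper applies the lemma only with $a=q$ or $a=p$.
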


Hereafter we assume $n>2$. Suppose  $b$ and  $1+b$ are non-zero elements of $DN_g(q,n)$. We stipulate that  $s,t$ are non-negative integers that  $b\in g^{\frac{q^s-1}{q-1}}H$ and $1+b\in g^{\frac{q^t-1}{q-1}}H$.

\begin{lemma}\label{Dabsq}Let $b$ and $1+b$ be non-zero elements of $DN_g(q,n)$.
Suppose $0\neq u\in D(1,b)$  and $\mu= \gcd(s,t,n)$, then the following are equivalent:
\begin{enumerate}[(1)]
\item $u^2\in D(1,b)$;
\item $u^{q^\mu-1}=1$ (note $u^{q^\mu-1}=1\Leftrightarrow  u^{q^s-1}=1\Leftrightarrow u^{q^t-1}=1$);
\item $u^{-1}\in D(1,b)$.
\end{enumerate}
\begin{proof}Note $(1+b)\circ u -u - b\circ u=(1+b)u^{q^t}-u - bu^{q^s}=0$. This implies
    \begin{equation} u^{q^t}-u=b(u^{q^s}-u^{q^t})
   \label{eq:1}\tag{Eq:1}
         \end{equation}

$(1) \implies (2)$ From $(1)$, we have $(1+b)\circ u^2 -u^2 - b\circ u^2=(1+b)u^{2q^t}-u^2 - bu^{2q^s}=0$. Thus
\begin{equation}     (u^{q^t}-u)(u^{q^t}+u)=b(u^{q^s}-u^{q^t})(u^{q^s}+u^{q^t})
     \label{eq:2}\tag{Eq:2}
     \end{equation}
     If $u^{q^t}\neq u$, then \eqref{eq:1} implies $u^{q^t}\neq u^{q^s}$. From \eqref{eq:2} we then have
     $u^{q^t}+u=u^{q^s}+u^{q^t}$. In other words, $u^{q^s}=u$. From \eqref{eq:1} we now have $b=-1$. This contradicts the fact that $1+b\neq 0$. We conclude that $u^{q^t}=u$. This means $1=u^{q^t-1}=u^{q^s-1}$. Since $u^{q^n-1}=1$, we have $1=u^{\gcd(q^s-1,q^t-1,q^n-1)}=u^{q^{\gcd(s,t,n)}-1}$ by the previous lemma.

$(2) \implies (3)$
    Note $(u^{-1})^{q^\mu} = (u^{q^\mu})^{-1}=u^{-1}$. Thus $(u^{-1})^{q^t}=(u^{-1})^{q^s}=u^{-1}$. We then have
    $(1+b)\circ u^{-1} -u^{-1} - b\circ u^{-1}=(1+b)(u^{-1})^{q^t}-u^{-1} - b(u^{-1})^{q^s}=(1+b)u^{-1}-u^{-1} - bu^{-1}=0$.

$(3) \implies (1)$
Since $u^{-1}\in D(1,b)$, we have $(1+b)u^{-q^t}-u^{-1} - bu^{-q^s}=0$. Therefore 
$0=(1+b)u^{q^s}-u^{q^s+q^t-1}-bu^{q^t}$. Combine with \eqref{eq:1} we have
$0=u^{q^s+q^t-1}+u-u^{q^t}-u^{q^s}=u(u^{q^s-1}-1)(u^{q^t-1}-1)$. This means 
$u^{q^s-1}=1$ or $u^{q^t-1}=1$. Again from  \eqref{eq:1} we have $u^{q^s-1}=u^{q^t-1}=1$. Now 
$$(1+b)\circ u^2 -u^2 - b\circ u^2=(1+b)u^{2q^t}-u^2 - bu^{2q^s}=(1+b)u^{2}-u^2 - bu^{2}=0.$$ Thus we have $(1)$.

The equivalences stated in $(2)$ follow easily from \eqref{eq:1}.
\end{proof}
\end{lemma}

\begin{theorem}\label{vectorSubspacequ} 
  Suppose $b$ and $1+b$ are non-zero elements of $DN_g(q,n)$.
Let $\mu= \gcd(s,t,n)$. Then $\mathbb{F}_{p^m} \subseteq D(1,b)$ when $m\mid l\mu$. Furthermore $(D(1,b),+)$ is a 
subspace  of $\mathbb{F}_{q^n}$ over the subfield $\mathbb{F}_{p^m}$ (here the scalar multiplication is the field multiplication).  Moreover if
$D(1,b)$ is closed under the field multiplication, then $(\mathbb{F}_{q^\mu}, +,\cdot) = (D(1,b), +, \cdot)$. 
\begin{proof} Clearly we have $\mathbb{F}_{p^m}$ a subfield of $\mathbb{F}_{q^n}$.
Let $0\neq d\in \mathbb{F}_{p^m}$. So $d^{q^t}=d^{q^s}=d$. Hence $(1+b)\circ d-d-b\circ d=
(1+b)d^{q^t}-d-bd^{q^s}=(1+b)d-d-bd=0$. We have $\mathbb{F}_{p^m} \subseteq D(1,b)$. Now it suffices to check if $dw\in D(1,b)$ for any $d\in \mathbb{F}_{p^m}$ and $w\in D(1,b)$.  Clearly
$(1+b)(dw)^{q^t}-(dw)-b(dw)^{q^s}=d[(1+b)w^{q^t}-w-bw^{q^s}]=d[(1+b)\circ w-w-b\circ w]=0$. Thus $(D(1,b),+)$ is a 
subspace  of $\mathbb{F}_{q^n}$ over the subfield $\mathbb{F}_{p^m}$.

Suppose $D(1,b)$ is closed under the field multiplication. We have  just shown that $\mathbb{F}_{q^\mu}\subseteq D(1,b)$.
It remains to show $D(1,b)\subseteq \mathbb{F}_{q^\mu}$. Let $u\in D(1,b)$, so $u^2\in D(1,b)$. Lemma~\ref{Dabsq} implies $u^{q^{\gcd(s,t,n)}-1}=1$. In other words, $u\in \mathbb{F}_{q^\mu}$. We are done!
\end{proof} 
\end{theorem}

 For purpose of investigation we view $D(1,b)$ as a vector space over the prime subfield $\mathbb{F}_p$.  Let ${\mathbf{B}}$ be a basis of the space  $D(1,b)$ over the prime subfield $\mathbb{F}_p$. Suppose $b$ and $1+b$ are non-zero. If $\lvert {\mathbf{B}}\rvert =l\mu$, then Theorem~\ref{vectorSubspacequ} implies that  $D(1,b)$ is closed under the field multiplication, and is indeed the field $\mathbb{F}_{q^\mu}$.
\begin{lemma}\label{D1bBase}
Let $\mathbf{B}$ be a basis of the vector space $D(1,b)$ over $\mathbb{F}_p$. We have $D(1,b)$ is not closed under the field multiplication if and only if 
there are $u_i,u_j\in \mathbf{B}$ such that $u_iu_j\not \in D(1,b)$.
\begin{proof}
Let $\mathbf{B}=\{u_1,u_2,\dots, u_m\}$. Obviously $\mathbf{B}\subseteq D(1,b)$. 

Assume $D(1,b)$ is not closed under the field multiplication. Let $\alpha_i, \beta_i\in \mathbb{F}_p$ such that 
$$(\sum \alpha_iu_i)(\sum \beta_iu_i)\not \in D(1,b).$$  In view of Theorem~\ref{vectorSubspacequ}, we know 
$\sum \alpha_iu_i$ and $\sum \beta_iu_i\in D(1,b)$. Note 
$$
(\sum \alpha_iu_i)(\sum \beta_iu_i)=
 (\sum \alpha_i\beta_iu_i^2) +(\sum\limits_{i\neq j} (\alpha_i\beta_j+\alpha_j\beta_i)u_iu_j)  $$
We either have $0\neq \alpha_i\beta_iu_i^2\not \in D(1,b)$ or  $0\neq (\alpha_i\beta_j+\alpha_j\beta_i)u_iu_j\not \in D(1,b)$ where $i\neq j$. 
In any case we have $u_iu_j\not \in D(1,b)$ for some $i$ and $j$. The converse is obvious.

\end{proof}
\end{lemma}

\begin{theorem}\label{Dabsqclosed}
  If $u$ and $v\in D(1,b)$ are nonzero such that $u^2,v^2\in D(1,b)$, then $uv\in D(1,b)$.
\begin{proof}There is nothing to prove if $b$ or $1+b=0$. Thus we assume $b$ and $1+b$ are non-zero. 
 Lemma~\ref{Dabsq} implies $1=u^{q^t-1}=u^{q^s-1}=v^{q^t-1}=v^{q^s-1}$.
 Using this fact one can easily show that
     \begin{align*} 
(1+b)\circ (uv) -uv -b\circ (uv) &=  (1+b)(uv)^{q^t}-uv-b(uv)^{q^s} \\ 
 &=   (1+b)(uv)-uv-b(uv) \\
 &=0
\end{align*}
    Therefore $uv\in D(1,b)$.

\end{proof}
\end{theorem}

\begin{corollary}
  Suppose $\mathbf{B}$ is a basis of the space $D(1,b)$ over the prime subfield $\mathbb{F}_p$. Then $D(1,b)$ is closed  under the field multiplication (i.e. $(D(1,b),+,\cdot)$ is a field) if and only if $u^2\in D(1,b)$ for any $u\in \mathbf{B}$ if and only if $\lvert \mathbf{B}\rvert =l\mu$ where $\mu= \gcd(s,t,n)$.
\end{corollary}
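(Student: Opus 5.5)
I will prove the three conditions equivalent in a cycle: (a) $D(1,b)$ is closed under field multiplication, (b) $u^2\in D(1,b)$ for every $u\in\mathbf{B}$, and (c) $\lvert\mathbf{B}\rvert=l\mu$. Throughout I assume $b$ and $1+b$ are non-zero, since this is the standing hypothesis under which $s,t$ and $\mu$ are defined. The cases $b=0$ and $b=-1$ are degenerate and would be noted separately: then $D(1,b)=\mathbb{F}_{q^n}$ directly from the definition.

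The implication (a)$\Rightarrow$(b) is immediate, since $\mathbf{B}\subseteq D(1,b)$. For (b)$\Rightarrow$(a), I use Lemma~\ref{D1bBase}. It suffices to show that $u_iu_j\in D(1,b)$ for all $u_i,u_j\in\mathbf{B}$. Basis vectors are non-zero, and by hypothesis $u_i^2,u_j^2\in D(1,b)$, so Theorem~\ref{Dabsqclosed} gives $u_iu_j\in D(1,b)$. Lemma~\ref{D1bBase} then says $D(1,b)$ is closed under field multiplication.

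For (a)$\Rightarrow$(c), the last assertion of Theorem~\ref{vectorSubspacequ} gives $D(1,b)=\mathbb{F}_{q^\mu}$. This field has $p^{l\mu}$ elements, so its $\mathbb{F}_p$-dimension is $l\mu$, and hence $\lvert\mathbf{B}\rvert=l\mu$.

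For (c)$\Rightarrow$(a), Theorem~\ref{vectorSubspacequ} applied with $m=l\mu$ gives $\mathbb{F}_{q^\mu}=\mathbb{F}_{p^{l\mu}}\subseteq D(1,b)$. Both sets have $\mathbb{F}_p$-dimension $l\mu$, which forces equality, so $D(1,b)$ is a subfield and in particular closed under field multiplication. This is the remark preceding Lemma~\ref{D1bBase}.

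No step is a real obstacle, because every implication is a direct citation of an earlier result. The only point needing care is that the containment $\mathbb{F}_{q^\mu}\subseteq D(1,b)$ comes from the clause ``$m\mid l\mu$'' of Theorem~\ref{vectorSubspacequ} with $m=l\mu$, together with $q^\mu=p^{l\mu}$. This containment is exactly what lets the dimension count in (c)$\Rightarrow$(a) close the argument.
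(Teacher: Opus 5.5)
Your proof is correct and is essentially the paper's own argument. The paper gives no separate proof of this corollary; it treats it as the combination of Lemma~\ref{D1bBase} with Theorem~\ref{Dabsqclosed} for the first equivalence, and Theorem~\ref{vectorSubspacequ} together with the dimension-count remark preceding Lemma~\ref{D1bBase} for the second, exactly as you assemble them (restricting to $b,1+b\neq 0$, where $s,t,\mu$ are defined, is also appropriate).
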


\begin{lemma}
When $n\mid q-1$ we have  $\frac{q^k-1}{q-1}\equiv k \pmod{n}$ for any non-negative integer $k$.
\begin{proof}
$\frac{q^k-1}{q-1}=1+q+q^2+\cdots+q^{k-1}\equiv k \pmod{n}$ since $q\equiv 1\pmod{n}$.

\end{proof}
\end{lemma}

The following example shows that even if $q-1=n$ we cannot guarantee that $D(a,b)$ 
is closed under the field multiplication, thus cannot be a subfield of $\mathbb{F}_{q^n}$. 
\begin{example}
Let $q=7$ and $n=6$. Thus $(7,6)$ is a Dickson Pair.  Let $g\in \mathbb{F}_{q^n}$ is a 
primitive element and  is a root to the primitive polynomial 
$p(x)=x^6+x^4+5x^3+4x^2+6x+3$. Let $a=1$, $b=g^{75073}\in gH$. 

Tedious computations show
$b=g^3 + 2g^2 + 4g + 5$
 and  $1+b=g^{27850}=g^3 + 2g^2 + 4g + 6\in g^4H$.
 The above lemma implies 
 $r=0$, $s=1$ and $t=4$. 
For simplicity we can write an element of $\mathbb{F}_{q^n}$ in a vector form, for example, $b=\begin{bmatrix}0&0&1& 2& 4& 5\end{bmatrix}$
 where each entry is an element of the prime subfield.

Recall  the space $D(1,b)$ over the prime subfield $\mathbb{F}_7$ is 
the kernel of the linear map $\phi$ on the space $\mathbb{F}_{7^6}$ over its prime subfield $\mathbb{F}_7$ 
 defined by (see \cite[p.149]{DJAGBA20})
$$\lambda\mapsto (a+b)\circ \lambda - a\circ \lambda -b\circ \lambda=(a+b)\lambda^{7^3}-a -b\lambda^{7^1}.
$$ 
 The map $\phi $ can be represented by   
$$
M=\left[\begin{matrix}6 & 3 & 1 & 0 & 2 & 0\\1 & 1 & 1 & 0 & 5 & 0\\5 & 4 & 1 & 0 & 4 & 0\\1 & 0 & 0 & 1 & 4 & 0\\5 & 2 & 0 & 0 & 6 & 0\\2 & 6 & 0 & 1 & 1 & 0\end{matrix}\right]
$$
a $6\times 6$ matrix over $\mathbb{F}_7$. For example, 
$\phi(b)=Mb^T=\left[\begin{matrix}2 &0 & 3 & 4 & 3 & 6\end{matrix}\right]^T
=2g^5 + 3g^3 + 4g^2 + 3g + 6$. Since $D(1,b)$ is the kernel of $\phi$,  it is 
the nullspace of $M$. A quick calculation yields a basis of the space $D(1,b)$ over $\mathbb{F}_7$: 
$$\mathbf{B}=\{\begin{bmatrix}
1 & 1 & 5 & 6 & 0 & 0\end{bmatrix}, \begin{bmatrix}0 & 0 & 0 & 0 & 0 & 1\end{bmatrix}\}=\{g^5+g^4+5g^3+6g^2,1\}.$$ 
We have $\lvert \mathbf{B} \rvert=2 \neq l\gcd(s,t,n)=1$. Theorem~\ref{vectorSubspacequ} implies this $D(1,b)$ is not closed under the field multiplication. Furthermore $\lvert D(1,b)\rvert=7^2$.
Let
$u=g^5+g^4+5g^3+6g^2$. Then $u^2=  2g^5 + 5g^4 + 5g^3 + 3g^2 + 3g + 3 =\begin{bmatrix}2&5&5&3&3&3   \end{bmatrix}$. But we have
$$
M{(u^2)}^T=\begin{bmatrix}3&6&5&3&3&5 \end{bmatrix}^T\neq 0, \text{ i.e. } d^2\not \in D(1,b).
$$

\end{example}

\begin{lemma}Suppose $u,v\in D(1,b)$ such that $u^2\in D(1,b)$ but $v^2\not \in D(1,b)$. Then $(u+v)^2\not \in D(1,b)$.
\begin{proof}
Suppose $(u+v)^2 \in D(1,b)$. Theorem~\ref{Dabsqclosed} implies $u(u+v)\in D(1,b)$. Since $(D(1,b),+)$ is an abelian group, we have $v^2\in D(1,b)$.
\end{proof}
\end{lemma}

\begin{definition}
Suppose $\mathbf{B}$ is a basis of the space $D(1,b)$ over the prime subfield $\mathbb{F}_p$. The set $\mathbf{B}_1=\{u\in \mathbf{B} \mid u^2\in D(1,b)\} $ is called the {\em square closed set} of $\mathbf{B}$. 

We stipulate that $\Span(X)$ is the subspace of $D(1,b)$ over $\mathbb{F}_p$ spanned by elements of the set $X\subseteq D(1,b)$.
\end{definition}

Obviously $D(1,b)$ is closed under the field multiplication if and only if $D(1,b)=\Span(\mathbf{B}_1)$.

\begin{lemma}
Suppose $u\in \Span(\mathbf{B}_1)$. Then $u^2\in D(1,b)$.
\begin{proof}
Let $u=\sum\limits_{u_i\in \mathbf{B}_1}\alpha_i u_i$ for some $\alpha_i\in \mathbb{F}_p$. Theorem~\ref{Dabsqclosed} implies $u_iu_j\in D(1,b)$ whenever $u_i^2, u_j^2\in \mathbf{B}_1$. Thus $u^2\in D(1,b)$.
\end{proof}
\end{lemma}

\begin{theorem}\label{maxsqclosedset}
  Suppose $b$ and $1+b$ are non-zero elements of $DN_g(q,n)$. Let $\mathbf{B}$ be a basis of the space $D(1,b)$ over the prime subfield and $\mathbf{B}_1$ be the square closed set of $\mathbf{B}$. Then
$(\Span(\mathbf{B}_1),+,\cdot)$ is a subfield of $\mathbb{F}_{q^n}$ and $\lvert \mathbf{B}_1\rvert\leq l\mu$. Furthermore,  it is possible to construct a  basis $\mathbf{C}$ of $D(1,b)$ such that  $\lvert \mathbf{C}_1\rvert=l\mu$ where $\mathbf{C}_1$ is the square closed set of $\mathbf{C}$. In this case  $span(\mathbf{C}_1)=\mathbb{F}_{q^\mu}$ where $\mu=\gcd(s,t,n)$.
\begin{proof} Claim: $\lvert \mathbf{B}_1\rvert\leq l\mu$. The above lemma shows that $\Span(B_1)$ is closed under the field multiplication and thus $(span(B_1),+,\cdot)$ is a subfield of $\mathbb{F}_{q^n}$ in $D(1,b)$. Lemma~\ref{Dabsq} implies $\Span(\mathbf{B}_1)\subseteq \mathbb{F}_{p^{l\mu}}$. Thus we have the claim that  $\lvert \mathbf{B}_1\rvert  \leq  l\mu$. Theorem~\ref{vectorSubspacequ} implies $\mathbb{F}_{p^{l\mu}}\subseteq D(1,b)$. Let $\mathbf{C}$ be a basis of $D(1,b)$ extended from a basis of $\mathbb{F}_{p^{l\mu}}$  over the prime subfield.  Since the square of any element of $\mathbb{F}_{p^{l\mu}}$  belongs to $D(1,b)$,  we have that
 the square closed set $\mathbf{C}_1$ of $\mathbf{C}$ is of size $\geq l\mu$. Thus $\lvert \mathbf{C}_1\rvert =l\mu$.
\end{proof}
\end{theorem}

Obviously,  $D(1,b)$ is not a field if and only if $l\mu=\lvert\mathbf{B}_1\rvert<\lvert \mathbf{B}\rvert<ln$.

Suppose $\mathbf{B}$ and $\mathbf{B}'$ are bases of the space $D(1,b)$ over the prime subfield $\mathbb{F}_p$. Let $\mathbf{B}_1$ and $\mathbf{B}'_1$ be the square closed sets of $\mathbf{B}$ and $\mathbf{B}'$ respectively. Is $\lvert \mathbf{B}_1\rvert=\lvert \mathbf{B}'_1\rvert=l\mu$? The answer is no in general. 
\begin{example}
Consider the example  $(q,n)=(4,9)$ listed in the Table~\ref{tablenotfield}. The square closed set of $\mathbf{B}$ is $\mathbf{B}_1=\{1\}$ while $l\mu=2$. Therefore $\Span({\mathbf{B}_1})=\mathbb{F}_2\subseteq  D(1,b)$. The above theorem illustrates a simple process to construct a basis such that its square closed set spans the field $\mathbb{F}_{2^2}$. In other words $\mathbb{F}_{2^2}\subseteq D(1,b)$.  
 It should be noted that $\mathbb{F}_{2^3}$ is a subfield of $\mathbb{F}_{4^9}$ but not contained in $D(1,b)$ since $g^{\frac{2^{18}-1}{2^3-1}}=g^{37449} \in \mathbb{F}_{2^3}$ but  $g^{37449} \not \in D(1,b)$.  In fact, $\mathbf{B}\cap \mathbb{F}_{4^3}=\{1\}$, i.e. $D(1,b)\not \subseteq  \mathbb{F}_{4^3}$. Hence there is no proper subfield of $\mathbb{F}_{4^9}$  containing $D(1,b)$.
\end{example}

\begin{lemma}Suppose $b\in DN_g(q,n)$.
  The following are equivalent.
\begin{enumerate}[(1)]
\item For any bases $\mathbf{B}$ and $\mathbf{B}'$ of the space $D(1,b)$ over $\mathbb{F}_p$, we have $\Span(\mathbf{B}_1)=\Span(\mathbf{B}'_1)$ where $\mathbf{B}_1$ and $\mathbf{B}'_1$ are the square closed sets of $\mathbf{B}$ and $\mathbf{B}'$ respectively. 
\item For any basis $\mathbf{B}$ of the space $D(1,b)$ over $\mathbb{F}_p$, we have $\Span(\mathbf{B}_1)=\mathbb{F}_{p^{l\mu}}$ where $\mathbf{B}_1$ is the square closed set of $\mathbf{B}$. (Equivalently $\lvert \mathbf{B}_1\rvert=l\mu$.)
\end{enumerate}
\begin{proof} It suffices to consider the case when $b$ and $1+b$ are non-zero. Recall $\mu=\gcd(s,t,n)$. 

 Assume (1). Let $\mathbf{B}$ be a basis of $D(1,b)$ over $\mathbb{F}_p$.
 Theorem~\ref{maxsqclosedset} implies there is a basis $\mathbf{C}$ of $D(1,b)$ such that the size of its square closed set $\mathbf{C}_1$ is the largest possible, i.e. $l\mu$. Hence (2) is true. The converse is obvious.
Thus (1) and (2) are equivalent.

\end{proof}
\end{lemma}

\begin{theorem}\label{subfieldinDab}
$\mathbb{F}_{q^{\mu}}$ is the unique largest subfield of $\mathbb{F}_{q^n}$ contained in $D(1,b)$.
\begin{proof}Recall $q=p^l$. Note that $\mathbb{F}_{q^\mu}=\mathbb{F}_{p^{l\mu}}\subseteq D(1,b)$ from Theorem~\ref{vectorSubspacequ}. Suppose $\mathbb{F}_{p^m}$ is a subfield of $\mathbb{F}_{q^n}$, i.e. $m\mid ln$.
We claim the following are equivalent.  
\begin{enumerate}[(1)]
 \item $\mathbb{F}_{p^m}\subseteq D(1,b)$.
\item  $g^{\frac{q^n-1}{p^m-1}}\in D(1,b)$ and $g^{\frac{q^n-1}{p^m-1}(q^\mu-1)}=1$.
\item  $m\mid l\mu$.
\end{enumerate}

Note $g^{\frac{q^n-1}{p^m-1}}$ generates all non-zero elements of $\mathbb{F}_{p^m}$. As  a consequence of Lemma~\ref{Dabsq} We have $(1) \Rightarrow (2)$. Next $(2)$ implies $p^m-1 \mid q^\mu-1=p^{l\mu}-1$ implies $m\mid l\mu$, that is $(3)$. Finally $(3)$ implies $\mathbb{F}_{p^m}\subseteq \mathbb{F}_{p^{l\mu}}\subseteq D(1,b)$, and thus we have $(1)$. 
\end{proof}
\end{theorem}

\begin{theorem}
There is a unique smallest subfield $\mathbb{F}_{q^\omega}$ of $\mathbb{F}_{q^n}$ containing $D(1,b)$ where $\mu \mid \omega\mid n$.
\begin{proof}
Note the intersection of two subfields is a subfield and a subfield containing $D(1,b)$ must contain $\mathbb{F}_{q^{\mu}}$. Thus $\mathbb{F}_{q^\omega}=\cap \{\mathbb{F}_{q^m}\mid  D(1,b)\subseteq \mathbb{F}_{q^m}, \mu \mid m\mid n\}$ is the desired subfield.
\end{proof}
\end{theorem}

We know $\mathbb{F}_{q^\mu} \subseteq D(1,b)\subseteq \mathbb{F}_{q^\omega}$ for some $\mu \mid \omega \mid n$. In particular $\mathbb{F}_{q^1} \subseteq D(1,b)\subseteq \mathbb{F}_{q^n}$.

Denote by $p^\alpha \mid\!\mid k$ if $p^\alpha\mid k$ but $p^{\alpha+1}\nmid k$. In the proof of \cite[Theorem 1]{HULL62}, some properties of $\dfrac{q^k-1}{q-1}\pmod{n}$ were obtained. We  restate the result in the following for convenience.

\begin{theorem}\label{HullDobell1}Suppose $(q,n)$ is a Dickson Pair. The set $\left\{\dfrac{q^k-1}{q-1}\right\}_k$ forms a complete residue system modulo $n$ whenever $k\geq 0$ runs through a complete residue system modulo $n$. Furthermore $\dfrac{q^k-1}{q-1}\equiv 0 \pmod n$ if and only if $k\equiv 0 \pmod n$.
\end{theorem}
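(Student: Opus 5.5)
We need to prove the Hull–Dobell type statement. Write $S_k=\frac{q^k-1}{q-1}=1+q+\cdots+q^{k-1}$. The plan has three steps: first show $S_k \bmod n$ depends only on $k \bmod n$; then show $S_k\equiv 0\pmod n$ only when $n\mid k$; and finally deduce that the $n$ residues are distinct.

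Recall the Dickson pair conditions on $(q,n)$:
\begin{enumerate}[(i)]
\item every prime divisor of $n$ divides $q-1$;
\item if $4\mid n$ then $4\mid q-1$.
\end{enumerate}
The key arithmetic fact I would establish first is the lifting-the-exponent statement
$$\nu_r\bigl(q^k-1\bigr)=\nu_r(q-1)+\nu_r(k)$$
for every prime $r\mid n$ and every $k\geq 1$. For odd $r$ this is the standard LTE lemma, since $r\mid q-1$ by (i). For $r=2$ with $2\mid n$, LTE in this form needs $4\mid q-1$. That holds when $4\mid n$, by (ii). When $2\,\|\, n$ we only need parity of $S_k$, and this is direct: $q$ is odd, so $S_k\equiv k\pmod 2$. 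Consequently $\nu_r(S_k)=\nu_r(k)$ for each prime $r\mid n$, at least up to the exponent of $r$ in $n$.

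Periodicity comes next. From $S_{k+n}=S_k+q^kS_n$, it suffices to show $n\mid S_n$. This follows from the valuation identity $\nu_r(S_n)=\nu_r(n)$ for every prime $r\mid n$. (Alternatively, cite \cite{HULL62}, which the paper says contains this.) So the map $k\mapsto S_k \bmod n$ is well defined on $\mathbb{Z}/n$.

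For the second claim, suppose $n\mid S_k$. Then $\nu_r(k)=\nu_r(S_k)\geq\nu_r(n)$ for all $r\mid n$. In the case $2\,\|\,n$ we use instead $S_k\equiv k \pmod 2$. Hence $n\mid k$; the converse is periodicity.

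For the complete residue system, suppose $S_j\equiv S_k\pmod n$ with $0\le j<k<n$. Then
$$S_k-S_j=q^jS_{k-j}\equiv 0 \pmod n.$$
Since $\gcd(q,n)=1$ (every prime of $n$ divides $q-1$), this gives $n\mid S_{k-j}$, so $n\mid k-j$ by the previous step, which is a contradiction. Thus the $n$ values are distinct modulo $n$ and form a complete residue system.

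The main obstacle is the $2$-adic case of the valuation identity. Here condition (ii) must be used precisely, and the case $2\,\|\,n$ with $q\equiv 3\pmod 4$ must be handled separately via the parity argument rather than LTE.
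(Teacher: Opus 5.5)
Your proof is correct and follows essentially the argument the paper relies on. The paper gives no proof of its own; it defers to the proof of Hull and Dobell's Theorem~1, which also works prime by prime via the lifting lemma for $\nu_r\bigl((q^k-1)/(q-1)\bigr)$, with the condition $4\mid q-1$ (when $4\mid n$) handling the prime $2$. Your identities $S_{k+n}=S_k+q^kS_n$ and $S_k-S_j=q^jS_{k-j}$ simply make the periodicity and injectivity steps self-contained. One wording point: for $2\,\|\,n$ and $q\equiv 3\pmod 4$ the equality $\nu_2(S_n)=\nu_2(n)$ can fail (e.g.\ $q=3$, $n=2$ gives $S_2=4$). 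This does no harm, because you only need $\nu_2(S_n)\geq 1$, which your parity observation $S_k\equiv k\pmod 2$ already supplies.
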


\begin{lemma}Let $(q,n)$ be a Dickson Pair.
Suppose $t_b\equiv \dfrac{q^t-1}{q-1} \pmod{n}$ and  $s_b\equiv \dfrac{q^s-1}{q-1} \pmod{n}$. Then $\gcd(s_b,n)=\gcd(s,n)$ and $\gcd(t_b,n)=\gcd(t,n)$. Hence $\gcd(s_b,t_b,n)=\gcd(s,t,n)$. In particular, $\gcd(s_b,t_b,n)=1$ if and only if $\gcd(s,t,n)=1$.
\begin{proof}Let $p^\alpha \mid\!\mid \gcd(s_b,n)$ where $p$ is a prime. From the assumption we have $0\equiv  \dfrac{q^s-1}{q-1} \pmod{p^{\alpha}}$.
Therefore $p^\alpha \mid s$ by Theorem~\ref{HullDobell1} and thus $\gcd(s_b,n)\mid \gcd(s,n)$. By the same token, we have $\gcd(s,n)\mid \gcd(s_b,n)$. As a consequence we have $\gcd(s_b,n)=\gcd(s,n)$. By symmetry we have $\gcd(t_b,n)=\gcd(t,n)$.
The rest follows easily.
\end{proof}
\end{lemma}

The above lemma indicates that
\begin{corollary}  If $b=g^{s_b}$ and $1+b=g^{t_b}$ are non-zero elements of $DN_g(q,n)$ such that $D(1,b)$ is not a subfield of $\mathbb{F}_{q^n}$, then $\mu=\gcd(s_b,t_b,n)$.
\end{corollary}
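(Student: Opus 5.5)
The corollary should follow by combining the preceding lemma with the definitions of $s$ and $t$. The work is bookkeeping: relate the exponents $s_b,t_b$ of $b$ and $1+b$ (as powers of $g$) to the coset indices $s,t$ used to define $\mu=\gcd(s,t,n)$, then apply the lemma $\gcd(s_b,t_b,n)=\gcd(s,t,n)$.

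First, fix what $s_b$ and $t_b$ mean here. We have $b=g^{s_b}$, and the cosets of $H=\langle g^n\rangle$ in $\mathbb{F}^*_{q^n}$ are $g^iH$ with $0\le i<n$. By definition of $s$, $b\in g^{\frac{q^s-1}{q-1}}H$, so $g^{s_b}H=g^{\frac{q^s-1}{q-1}}H$. This forces
$$s_b\equiv \frac{q^s-1}{q-1}\pmod n.$$
The same argument applied to $1+b=g^{t_b}$ gives
$$t_b\equiv \frac{q^t-1}{q-1}\pmod n.$$
These are exactly the hypotheses of the preceding lemma, which was stated with $s_b,t_b$ only defined modulo $n$. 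Theorem~\ref{HullDobell1} guarantees that $s$ and $t$ exist, since the numbers $\frac{q^k-1}{q-1}$ run through all residues modulo $n$. The lemma then gives $\gcd(s_b,t_b,n)=\gcd(s,t,n)=\mu$.

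The hypothesis that $D(1,b)$ is not a subfield plays only a contextual role. It rules out the degenerate cases already covered by Lemma~3.5 of \cite{DJAGBA20} and \cite[Theorem 3.13]{DJAGBA20}, and it places us in the setting where $\mu$ is the meaningful invariant, namely $\mathbb{F}_{q^\mu}$ being the largest subfield inside $D(1,b)$ (Theorem~\ref{subfieldinDab}). The nonvanishing of $b$ and $1+b$ is already assumed in the statement, so $s$ and $t$ are well defined.

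The only step needing care is that the congruence modulo $n$ really follows from equality of cosets, which amounts to $g^{i}\in H\iff n\mid i$. This holds because $n\mid q^n-1$ for a Dickson pair, so $H$ has index exactly $n$. Once that is checked, the conclusion is immediate.
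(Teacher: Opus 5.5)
Your proposal is correct and is essentially the paper's argument. The paper derives the corollary directly from the preceding lemma $\gcd(s_b,t_b,n)=\gcd(s,t,n)$, and you simply make explicit the step it leaves implicit, namely that $g^{s_b}\in g^{\frac{q^s-1}{q-1}}H$ forces $s_b\equiv \frac{q^s-1}{q-1}\pmod n$ because $H=\langle g^n\rangle$ has index $n$. Your remark that the hypothesis ``$D(1,b)$ is not a subfield'' is never actually used is also accurate.
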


The following table lists some examples such that $b\in DN_g(q,n)$, $u\in \mathbf{B}\subseteq D(1,b)$ but $u^2 \not \in D(1,b)$. The square closed set $\mathbf{B}_1$ (those inside $[\cdots ]$) of $\mathbf{B}$.  One should note that the bases $\mathbf{B}$ obtained in the table might not yield the largest square closed sets $\mathbf{B}_1$ and thus $\lvert \mathbf{B}_1\rvert\leq l\mu$. Recall $\mu=\gcd(s,t,n)$ and $\mathbb{F}_{q^\mu} \subseteq D(1,b)\subseteq \mathbb{F}_{q^\omega}$. Also $b=g^{s_b}\in g^{\frac{q^s-1}{q-1}}H$ and $1+b=g^{t_b}\in g^{\frac{q^t-1}{q-1}}H$ where  $H=\langle g^n \rangle$ the multiplicative subgroup of ($\mathbb{F}^*_{q^n},\cdot)$ generated by $g^n$. Those entries that have $s_b\not\equiv s \pmod{n}$\footnote{All examples shown in the table have $s_b\equiv s\pmod{n}$.}, $t_b\not\equiv t \pmod{n}$\footnote{$t_b\not\equiv t \pmod{n}$ is not true in general. See the pair $(q,n)=(5,8)$.}, $\gcd(s_b,n)>1$, $\gcd(t_b,n)>1$, $\gcd(s_b,t_b)>1$, $\gcd(s_b,n)\neq \gcd(s,n)$, or $\gcd(t_b,n)\neq \gcd(t,n)$ are included in footnotes.

\pagebreak

\begin{center}
\begin{longtable}[H]{|p{.12\linewidth}| p{.14\linewidth}|p{.31\linewidth}| p{.31\linewidth}|} 

 \hline 
  \rule{0pt}{3ex}$(q,n) \newline
 s_b,s\newline t_b,t,
\newline \mu,\omega$& Primitive poly  \newline $p(x)$ over $\mathbb{F}_p$ & $\mathbf{B}=\cdots,[\mathbf{B}_1]$   &$b=g^{s_b},1+b=g^{t_b}, u, u^2$ \\ 
\hlineB{4}
 \rule{0pt}{3ex}$(4,9) \newline 18352,1\newline 182263, 4\newline 1,9$  & $x^{18}+ x^{12} + x^{10} + x + 1$ &$g^{8135}=g^{17} + g^{13} + g^{10} + g^7 + g^6 + g^4 + g^3 + g, \newline g^{64455}=g^{16} + g^{14} + g^{13} + g^{12} + g^{10} + g^9 + g^6 + g^5 + g^4 + g^3, \newline g^{53440}=g^{15} + g^{14} + g^{13} + g^{11} + g^{10} + g^4 + g^3, \newline [1]$
 & $g^{18352}=g^{13} + g^{10} + g^9 + g^8 + g^4 + g^3 + g,\newline  g^{182263}, \newline
g^{8135}=g^{17} + g^{13} + g^{10} + g^7 + g^6 + g^4 + g^3 + g, \newline g^{16270}=g^{17} + g^{12} + g^{11} + g^{10} + g^8 + g^6 + g^5 + g^4 + g^2$\\  
 \hline
\rule{0pt}{3ex} $(7,6)$\footnote{$\gcd(t_n,n)=\gcd(t,n)=2$}\newline $75073,1\newline 27850,4 \newline 1,6$ & $x^6 + x^4 + 5x^3 + 4x^2 + 6x + 3$ &$g^{55220}=g^5+g^4+5g^3+6g^2, \newline [1]$
 & $g^{75073}=g^3 + 2g^2 + 4g + 5, \newline 
 g^{27850}, \newline g^{55220}=g^5+g^4+5g^3+6g^2, \newline  g^{110440}=2g^5 + 5g^4 + 5g^3 + 3g^2 + 3g + 3$\\ 
 \hline
  \rule{0pt}{3ex} $(5,8)$\footnote{$t_b\not \equiv t \pmod{n},\gcd(s_b,n)=\gcd(s,n)=4, \gcd(s_b,t_b)=21$}\newline$ 258132,4\newline 267435,7 \newline 1,8\newline 267435\equiv 3 \pmod{8}$ & $x^8 + x^4 + 3x^2 + 4x + 2$ &$g^{105697}=g^7 + 4g^6 + 2g^4 + 2g^3 + 4g^2 + g, \newline [1]$
 & $g^{258132}=g^6 + 4g^5 + 3g^4 + g + 2,\newline g^{267435}, \newline g^{105697}=g^7 + 4g^6 + 2g^4 + 2g^3 + 4g^2 + g,\newline g^{211394}=4g^7 + 4g^6 + 2g^5 + g^4 + g^3 + 2g^2 + 2g + 2 $\\
 \hline
\rule{0pt}{3ex}$(8,7)\newline 544237,1\newline 1711762,3\newline 1,7$ & $x^{21} + x^6 + x^5 + x^2 + 1$ &$g^{1896446}=g^{20} + g^{16} + g^{15} + g^{14} + g^{13} + g^{12} + g^8 + g^7 + g^3, \newline g^{1297260}=g^{19} + g^{15} + g^{14} + g^{11} + g^9 + g^8 + g^7 + g^4 + g^3 + g^2 + g, \newline g^{700381}=g^{18} + g^{16} + g^{13} + g^{11} + g^9 + g^7 + g^6 + g^3 + g^2, \newline g^{364372}=g^{17} + g^{16} + g^{15} + g^{13} + g^{12} + g^8 + g^6 + g^4 + g, \newline g^{1136601}=g^{10} + g^6 + g^4 + g^2,\newline [1]$
 & $g^{544237}=g^{17} + g^{14} + g^{11} + g^9 + g^8 + g^7 + g^6 + g^5 + g^3,\newline g^{1711762},\newline g^{1896446}=g^{20} + g^{16} + g^{15} + g^{14} + g^{13} + g^{12} + g^8 + g^7 + g^3,\newline g^{1695741}=g^{19} + g^{17} + g^{15} + g^{12} + g^{11} + g^9 + g^6 + g^4 + g^2 + 1 $\\ 
 \hline
 \rule{0pt}{3ex} $(13,6)$\footnote{$\gcd(s_b,n)=\gcd(s,n)=2$}\newline $134128,4\newline 327229, 1\newline 1,6$  & $x^6 + 10x^3 + 11x^2 + 11x + 2$ &$g^{3773368}=g^5 + 3g^4 + 11g^3 + 3g^2, \newline [1]$
 & $g^{134128}=2g^2 + 2,\newline g^{327229} ,\newline g^{3773368}=g^5 + 3g^4 + 11g^3 + 3g^2,\newline g^{2719928}=4g^5 + 7g^4 + g^3 + 3g^2 + 12g + 7 $\\ 
 \hline
\rule{0pt}{3ex} $(7,9)$\footnote{$\gcd(s_b,n)=\gcd(s,n)=3$}\newline $11634549,6\newline 7471843,7,\newline 1,9$  & $ x^9 + 6x^4 + x^3 + 6x + 4
$ &$g^{32509044}=g^8 + 5g^7 + g^6 + 2g^5 + 5g^4 + 4g^3 + 5g^2 + 3g, \newline [1]$
 & $g^{11634549}=3g^5 + 3g^4 + 6g^3 + 6g^2 + 3g + 4,\newline g^{7471843},\newline g^{32509044}=g^8 + 5g^7 + g^6 + 2g^5 + 5g^4 + 4g^3 + 5g^2 + 3g,\newline g^{24664482}=6g^8 + 5g^7 + 6g^6 + 3g^4 + g^3 + 5g^2 + 5g + 2 $\\
 \hline

\rule{0pt}{3ex} $(19,6)$\footnote{$\gcd(t_b,n)=\gcd(t,n)=2$}\newline $28108087,1\newline 20766196,4, \newline 1,6$  & $ x^6 + 17x^3 + 17x^2 + 6x + 2
$ &$g^{12162079}=g^5 + 2g^4 + 8g^3 + g^2 + 15g, \newline [1]$
 & $g^{28108087}=7g^3 + 6g^2 + 14g + 15, \newline g^{20766196},\newline g^{12162079}=g^5 + 2g^4 + 8g^3 + g^2 + 15g,\newline  g^{24324158}=4g^5 + 4g^4 + 14g^2 + 2g + 12 $\\
 \hline
\rule{0pt}{3ex} $(13,8)\footnote{$\gcd(s_b,t_b)=3$}\newline 265720629,5\newline 222796497,1, \newline 1,8$  & $ x^8 + 8x^4+12x^3 +2x^2 + 3x + 2
$ &$g^{646885587}=g^7 +6g^6+6g^5 + 11g^4 +11g^3 + 8g^2 + 12g, \newline [1]$
 & $g^{265720629}=8g^6+8g^5+12g^4+g^3+3g^2+3g+6 ,\newline g^{222796497} ,\newline   g^{646885587}= g^7 +6g^6+6g^5 + 11g^4 + 11g^3 + 8g^2 + 12g,\newline  g^{478040454}=11g^7 +11g^6+8g^5 + 10g^3 + 7g^2 + 10g + 2$\\
 \hline 
\caption{Examples when $D(1,b)$ is not a field}
\label{tablenotfield}
\end{longtable}
\end{center}

\begin{theorem}\label{primitivenormalbasisfield}\cite{LenstraSchoof87}
For every prime power $q > 1$ and every positive integer $m$ there exists a
primitive normal basis of $\mathbb{F}_{q^m}$  over $\mathbb{F}_{q}$.
\end{theorem}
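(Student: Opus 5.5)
The statement is the Lenstra--Schoof primitive normal basis theorem, cited from \cite{LenstraSchoof87}; the paper invokes it rather than proving it. If I were to reconstruct a proof, I would follow the standard character-sum strategy of Lenstra and Schoof.

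\emph{Step 1: reduce to a counting problem.} Regard $\mathbb{F}_{q^m}$ as an $\mathbb{F}_q[x]$-module via the Frobenius $\sigma(\alpha)=\alpha^q$. An element $\alpha$ is normal exactly when its $\mathbb{F}_q[x]$-order is $x^m-1$, and primitive exactly when its multiplicative order is $q^m-1$. Let $N$ be the number of elements that are both primitive and normal. Write the indicator of primitivity with multiplicative characters, summing over divisors $d\mid q^m-1$ and weighting by $\mu(d)/\varphi(d)$. Write the indicator of normality with additive characters, summing over monic divisors $f\mid x^m-1$ and weighting by the polynomial M\"obius function. This gives
$$N=\theta_1\theta_2\sum_{d\mid q^m-1}\ \sum_{f\mid x^m-1}\frac{\mu(d)\mu(f)}{\varphi(d)\Phi(f)}\sum_{\chi_d,\psi_f}\ \sum_{\alpha}\chi_d(\alpha)\psi_f(\alpha),$$
where $\theta_1=\varphi(q^m-1)/(q^m-1)$ and $\theta_2=\Phi(x^m-1)/q^m$.

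\emph{Step 2: estimate the sums.} The term with both characters trivial gives the main term $\theta_1\theta_2 q^m$. Every other term is a Gauss sum. When both characters are nontrivial its absolute value is $q^{m/2}$; when exactly one is trivial it is $0$ or $1$. This yields the sufficient condition $q^{m/2}\ge W(q^m-1)\,W(x^m-1)$, where $W$ counts squarefree divisors.

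\emph{Step 3: handle small cases.} This step is the main obstacle. The criterion holds for all but finitely many pairs $(q,m)$ once one uses $W(n)=O(n^{\varepsilon})$ and the bound $W(x^m-1)\le 2^m$, or better, a count of distinct irreducible factors. The remaining pairs need a sieve refinement, in the style of Cohen's sieve, that splits off the large prime divisors of $q^m-1$ and the high-degree factors of $x^m-1$. Whatever is still left after that must be checked by explicit computer search. I would start with the sieve and accept a finite computation at the end, as Lenstra and Schoof do. In the present paper, though, the theorem is simply taken as known.
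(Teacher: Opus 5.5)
You treat this statement the same way the paper does: as the cited Lenstra--Schoof theorem, invoked rather than proved. Your outline is the standard route by which it is actually proved: character-sum indicators for primitivity and normality, Gauss sum bounds giving the criterion $q^{m/2}\ge W(q^m-1)\,W(x^m-1)$, then a sieve and a finite computer check.

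One small correction to Step 3. The crude bound $W(x^m-1)\le 2^m$ turns the criterion into $q^{m/2}\ge 2^m\,W(q^m-1)$, which fails for every $m$ when $q\le 4$. So the finer count of distinct irreducible factors of $x^m-1$ is genuinely needed, not merely better, to leave only finitely many exceptional pairs.
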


Inspired by Lenstra and Schoof~\cite{LenstraSchoof87} we introduce following notation. 

\begin{definition}Suppose $\mathbf{B}$ is a basis of the space $D(1,b)$ 
 over $\mathbb{F}_p$ such that $\mathbf{B}$ is of the form  $\{u, u^p, \ldots, u^{p^{m-1}}\}$, that is a basis consisting of all the algebraic conjugates of a  primitive element $u$ of $\mathbb{F}_{q^n}$, with $l\mu \leq m$. We  call the basis $\mathbf{B}$ {\em primitive normal} and the element $u$ a {\em primitive normal element} of $D(1,b)$ over $\mathbb{F}_p$.
\end{definition}

\begin{question}  Are the following equivalent?
  \begin{enumerate}
    \item There is a primitive  normal basis of $D(1,b)$ over $\mathbb{F}_p$.
    \item $D(1,b)=\mathbb{F}_{p^{l\mu}}$.
      \end{enumerate}
\end{question}
It is clear that $2. \implies 1.$ is direct consequence of Theorem~\ref{primitivenormalbasisfield}.

\bibliographystyle{plain}

\end{document}